\newtheorem{theorem}{Theorem}[section]
\newtheorem{corollary}[theorem]{Corollary}
\theoremstyle{definition}
\newtheorem{remark}[theorem]{Remark}
\newtheorem{example}[theorem]{Example}
\numberwithin{equation}{section}
\begin{document}

\title[Jordan maps and zLpd algebras]{J\lowercase{ordan maps and zero} L\lowercase{ie product determined algebras}}

\thanks{Supported by the Slovenian Research Agency (ARRS) Grant P1-0288. }

\author{M\lowercase{atej} Bre\v sar \url{https://orcid.org/0000-0001-7574-212X}}
\address{Faculty of Mathematics and Physics,  University of Ljubljana,  and Faculty of Natural Sciences and Mathematics, University of Maribor, Slovenia}
\email{matej.bresar@fmf.uni-lj.si}

\keywords{Bilinear map, zero Lie product determined algebra, derivation, Jordan derivation,  Jordan homomorphism, functional identity}

\subjclass[2020]{16W10, 16W20, 16W25, 16R60}

\begin{abstract} Let $A$ be  an algebra over a field $F$ with {\rm char}$(F)\ne 2$. If $A$ is generated as an algebra by $[[A,A],[A,A]]$, then for every
 skew-symmetric bilinear map $\Phi:A\times A\to X$, where $X$ is an arbitrary vector space over $F$, the condition that $\Phi(x^2,x)=0 $
for all $x\in A$ implies that
$\Phi(xy,z) +\Phi(zx,y) + \Phi(yz,x)=0$ for all $x,y,z\in A$. This is applicable to the question of whether $A$ is zero Lie product determined, and is also used in proving that a Jordan homomorphism from $A$ onto a semiprime algebra $B$ is the sum of a homomorphism and an antihomomorphism.
\end{abstract}

\maketitle
\centerline{
{\em Dedicated to Vesselin Drensky on his 70th birthday}}

\newcommand\E{\ell}
\newcommand\mathcalM{{\mathcal M}}
\newcommand\pc{\mathfrak{c}}

\newcommand{\enp}{\begin{flushright} $\Box$ \end{flushright}}

\section{Introduction} 

This paper is centered around the following question. Given an algebra $A$, does every bilinear map $\Phi:A\times A\to X$, where $X$ is an arbitrary vector space, which satisfies
\begin{equation} \label{prv} \Phi(x^2,x)=0\end{equation} for all $x\in A$  also
satisfies
\begin{equation} \label{dru}\Phi(xy,z) +\Phi(zx,y) + \Phi(yz,x)=0\end{equation} for all $x,y,z\in A$? This question is intimately connected with zero Lie product determined algebras  and 
is also related to some problems that can be solved by using functional identities. 
 Let us explain this more precisely.

An algebra $A$ over a field $F$ is said to be {\em zero Lie product determined} (zLpd for short) if, for every bilinear map $\Phi:A\times A\to X$ where $X$ is a vector space over $F$, the condition that $\Phi(x,y)=0$ whenever $[x,y]=0$ implies that there exists a linear map $\tau:A\to X$ such that \begin{equation} \label{tre} \Phi(x,y) = \tau([x,y])\end{equation}
for all $x,y\in A$; here, $[x,y]$ of course stands for $xy-yx$.  These algebras are one of the topics of the recently published book  \cite{zpdbook}  (it should be remarked that in the above definition it is enough to require that $X$ is 1-dimensional, see \cite[Proposition 1.3]{zpdbook}). Note that
 $\Phi(x,y)=0$ whenever $[x,y]=0$  implies that $\Phi$  satisfies \eqref{prv} and is skew-symmetric (the latter follows from   $\Phi(x,x)=0$ for every $x$). 
  On the other hand,  \eqref{tre}  implies \eqref{dru}.  
The converse implication is not true.  
However, 
when applying the condition that an algebra is zLpd it is sometimes enough to use the fact that the corresponding bilinear map satisfies
\eqref{dru}. This has been the case in  the recent paper \cite{Bl} (and also \cite[Section 7.2]{zpdbook}) where the techniques of functional identities \cite{FIbook} have been applied to  the problem of describing commutativity preserving linear maps in zLpd algebras. It seems plausible that a similar approach can be used for some other problems to which functional identities  are applicable.

Throughout, we assume that char$(F) \ne 2$.
Our basic result, Theorem \ref{mt}, from which all the others follow in particular shows that \eqref{prv} implies \eqref{dru} provided that  $A$ is equal to $ {\rm Alg}([[A,A],[A,A]])$, the subalgebra generated by all elements of the form $[[x,y],[z,w]]$, $x,y,z,w\in A$. This is fulfilled for every noncommutative simple algebra $A$ (Corollary \ref{ca}), which reduces the question of whether a simple algebra $A$ is zLpd to the question of whether every derivation from $A$ to its dual $A^*$ is inner (Corollary \ref{cb}) as well as to the question of triviality of the second cohomology group $H ^2(A,F)$ (Corollary \ref{cb2}). In Section \ref{s3}, we apply Theorem \ref{mt} to show that every surjective Jordan homomorphism from an algebra $A$ satisfying $A= {\rm Alg}([[A,A],[A,A]])$ to a semiprime algebra $B$ is the sum of a homomorphism and an antihomomorphism (Theorem \ref{Jjorhom2}).

The proof of Theorem \ref{mt} is based on the idea that a bilinear map 
satisfying \eqref{prv} gives rise to a Jordan derivation from $A$ to its dual $A^*$ (which makes it possible for us to use a general result on Jordan derivations from \cite{Bj}). This idea is known from the analytic study of zLpd algebras, see \cite[Section 6.2]{zpdbook}.  In the course of writing the book \cite{zpdbook}, the author somehow overlooked that it  is applicable also in the algebraic context. The present paper may be viewed as a supplement to the book.

\section{Main results}
\label{s2}
Let $A$ be an algebra over a field $F$ and $M$ be an $A$-bimodule. Recall that a linear map $\delta:A\to M$ is called a {\em Jordan derivation} if 
\begin{equation} \label{jd} \delta(yz+zy)=\delta(y)\cdot z + y\cdot \delta(z) +  \delta(z)\cdot y  + z\cdot\delta(y)\end{equation}
for all $y,z\in A$. If $\delta$ satisfies $$\delta(yz)=\delta(y)\cdot z + y\cdot \delta(z) $$ for all $y,z\in A$, then it is called a {\em derivation}. Obviously, a derivation is also a Jordan derivation. The converse is not always true. However,
\cite[Theorem 3.1]{Bj} states that if  char$(F)\ne 2$, then every Jordan derivation $\delta:A\to M$ satisfies 
\begin{equation} \label{jd2}\delta(yu)=\delta(y)\cdot u+y\cdot \delta(u)\end{equation} for all  $u\in {\rm Alg}([[A,A],[A,A]])$ and all  $y\in A$.
This result is the key to our basic theorem, which we now state.

\begin{theorem} \label{mt}
Let $A$ be an algebra over a field $F$ with {\rm char}$(F)\ne 2$. If a skew-symmetric bilinear map $\Phi:A\times A\to X$, where $X$ is an arbitrary vector space over $F$, satisfies $$\Phi(x^2,x)=0 $$
for all $x\in A$, then 
$$\Phi(xy,u) +\Phi(ux,y) + \Phi(yu,x)=0$$ for all $u\in {\rm Alg}([[A,A],[A,A]])$ and all  $x,y\in A$.
\end{theorem}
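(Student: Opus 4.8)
The plan is to convert the given bilinear-map condition into a statement about a Jordan derivation and then invoke \cite[Theorem 3.1]{Bj}. First I would fix an arbitrary $\lambda \in X^*$ and replace $\Phi$ by the scalar-valued skew-symmetric bilinear form $\varphi = \lambda\circ\Phi : A\times A\to F$, which still satisfies $\varphi(x^2,x)=0$ for all $x$; since $X$ embeds into $(X^*)^*$, it suffices to prove the identity $\varphi(xy,u)+\varphi(ux,y)+\varphi(yu,x)=0$ for every such $\varphi$. Next I would use $\varphi$ to build a linear map $\delta:A\to A^*$ by the recipe $\langle \delta(y),z\rangle := \varphi(y,z)$ (or a symmetrized variant), where $A^*$ carries its usual $A$-bimodule structure $\langle a\cdot f, b\rangle = \langle f, ba\rangle$, $\langle f\cdot a, b\rangle = \langle f, ab\rangle$. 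The claim to verify is that the linearization of $\varphi(x^2,x)=0$ is exactly the assertion that $\delta$ is a Jordan derivation of $A$ with values in the bimodule $A^*$.

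The key computation is the polarization. Replacing $x$ by $x+y$, $x+z$, and $x+y+z$ in $\varphi(x^2,x)=0$ and combining, one extracts the full polarized identity; because $\varphi$ is skew-symmetric and $\mathrm{char}(F)\ne 2$, the surviving relation should be equivalent to
\[
\varphi(yz+zy,x) + \varphi(zx+xz,y) + \varphi(xy+yx,z) = 0
\]
for all $x,y,z$, or equivalently, after rewriting in terms of $\delta$, to the Jordan-derivation identity \eqref{jd} for $\delta:A\to A^*$. I would carry out this bookkeeping carefully, keeping track of which slot plays which role, and matching $\langle\delta(yz+zy),x\rangle$ against $\langle \delta(y)\cdot z + y\cdot\delta(z)+\delta(z)\cdot y + z\cdot\delta(y), x\rangle$ using the bimodule action formulas above; this is the step where the precise definition of $\delta$ in terms of $\varphi$ has to be pinned down so that everything lines up, and it is the main technical obstacle, though it is essentially a finite linear-algebra verification.

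Once $\delta$ is known to be a Jordan derivation, \cite[Theorem 3.1]{Bj} applies: since $\mathrm{char}(F)\ne 2$, we get $\delta(yu) = \delta(y)\cdot u + y\cdot\delta(u)$ for all $u\in {\rm Alg}([[A,A],[A,A]])$ and all $y\in A$. Pairing this identity against an arbitrary $z\in A$ and translating back through $\langle\delta(\cdot),\cdot\rangle = \varphi(\cdot,\cdot)$ and the bimodule formulas yields, for all $u\in {\rm Alg}([[A,A],[A,A]])$ and all $x,y,z\in A$, precisely a relation among $\varphi(yu,z)$, $\varphi(y,uz)$ and $\varphi(u,zy)$; reindexing the variables and using skew-symmetry of $\varphi$ produces $\varphi(xy,u)+\varphi(ux,y)+\varphi(yu,x)=0$. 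Since $\lambda$ was arbitrary, Theorem \ref{mt} follows. The only points requiring care are the exact symmetrization used in defining $\delta$ and the reduction to scalar-valued $\Phi$, both of which are routine; the heart of the argument is recognizing that the hypothesis is nothing but a disguised Jordan-derivation condition.
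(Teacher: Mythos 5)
Your proposal is correct and follows essentially the same route as the paper: reduce to scalar-valued $\Phi$ via linear functionals, define $\delta:A\to A^*$ by $\delta(y)(x)=\Phi(y,x)$ with the standard bimodule structure on $A^*$, check via polarization and skew-symmetry that $\delta$ is a Jordan derivation, and conclude by \cite[Theorem 3.1]{Bj}. The only detail left open (the ``symmetrized variant'') is unnecessary --- the unsymmetrized definition of $\delta$ works directly, precisely because skew-symmetry lets the linearized identity be rewritten as the Jordan-derivation condition.
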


\begin{proof}
By considering $\omega\circ\Phi$, where $\omega$ is an arbitrary 
linear functional on $X$, we see that 
we may assume that $X=F$.


Note that the dual space $A^*$ of $A$ becomes an $A$-bimodule by setting
$$(x\cdot f)(y) = f(yx)\quad\mbox{and}\quad (f\cdot x)(y) = f(xy)$$
for all $x,y\in A$, $f\in A^*$. Define $\delta:A\to A^*$ by
$$\delta(y)(x)=\Phi(y,x).$$
We claim that $\delta$ is a Jordan derivation. Indeed, 
linearizing  $\Phi(x^2,x)=0$ we obtain
$$ \Phi(xy+yx,z) + \Phi(zx+xz,y) + \Phi(yz + zy,x) =0$$
for all $x,y,z\in A$.
Since $\Phi$ is skew-symmetric, this can be rewritten as   
\begin{equation}\label{ll}\Phi(yz + zy,x)=  \Phi(z,xy+yx)+ \Phi(y,zx+xz).\end{equation}
One easily checks that \eqref{ll} implies  \eqref{jd}, which proves our  claim.

By \cite[Theorem 3.1]{Bj}, $\delta$ satisfies \eqref{jd2}
 for all  $u\in {\rm Alg}([[A,A],[A,A]])$ and all  $y\in A$, which
 yields the desired conclusion. 
\end{proof}

We continue with an  example showing  that \eqref{prv} does not always imply \eqref{dru}, so the involvement of  ${\rm Alg}([[A,A],[A,A]])$ in  Theorem \ref{mt} is necessary. Note that the algebra $A$ in this example satisfies $[[A,A],[A,A]]=\{0\}$. 

\begin{example} Let $F$ be any field and 
let $A$ be the unital $F$-algebra with generators $x_1,x_2,y_1,y_2$ satisfying the relations 
$$x_1x_2 + x_2x_1= y_1y_2 + y_2y_1 = x_i^2 = y_i^2 =x_iy_j = y_jx_i =0$$
for all $i,j=1,2$. That is, $A$ is the direct product of two copies of the Grassmann algebra in two generators without unity, to which we adjoin a unity. Observe that the elements $1,x_1, x_2, x_1x_2, y_1,y_2, y_1y_2$ form a basis of $A$, which is thus a $7$-dimensional algebra. Every element in the linear span  of $x_1, x_2, x_1x_2, y_1,y_2, y_1y_2$ has square $0$, so every bilinear functional $\Phi:A\times A\to F$ that satisfies $\Phi(x,x)=\Phi(1,x)=0$ for every $x\in A$ 
also satisfies  $\Phi(x^2,x)=0$ for every $x\in A$. 
Therefore, if $\Phi$ is such that $$\Phi(x_1x_2,y_1)=-\Phi(y_1,x_1x_2)=1$$
and $\Phi(u,v)=0$ for all other elements $u,v$ from our basis, then $\Phi$ is skew-symmetric and satisfies $\Phi(x^2,x)=0$ for every $x\in A$. However, for $x=x_1$, $y=x_2$, and $z=y_1$ we have $$\Phi(xy,z) + \Phi(zx,y) + \Phi(yz,x)=1.$$ 
\end{example}

If $A$ is equal to $ {\rm Alg}([[A,A],[A,A]])$, then Theorem \ref{mt} gives a definitive conclusion. Many algebras satisfy this condition. 
In particular, the following corollary holds.

\begin{corollary} \label{ca}
Let $A$ be a  simple algebra over a field $F$ with {\rm char}$(F)\ne 2$. If a skew-symmetric bilinear map $\Phi:A\times A\to X$, where $X$ is an arbitrary vector space over $F$, satisfies $$\Phi(x^2,x)=0$$ for all $x\in A$, then 
$$\Phi(xy,z) +\Phi(zx,y) + \Phi(yz,x)=0$$ for all $x,y,z\in A$.
\end{corollary}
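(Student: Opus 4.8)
The plan is to deduce Corollary \ref{ca} from Theorem \ref{mt} by showing that a noncommutative simple algebra $A$ satisfies $A = {\rm Alg}([[A,A],[A,A]])$, after disposing of the commutative case separately. So the first step is to observe that if $A$ is commutative, then $xy = yx$ for all $x,y\in A$, hence $\Phi(xy,z) + \Phi(zx,y) + \Phi(yz,x)$ is a completely symmetric expression in $x,y,z$; linearizing the hypothesis $\Phi(x^2,x) = 0$ and using skew-symmetry of $\Phi$ forces this symmetric sum to vanish. (Alternatively, for a commutative simple algebra $F$ is a field and the statement is a trivial polarization identity.) From now on assume $A$ is noncommutative.

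Next I would consider the linear subspace $I := {\rm Alg}([[A,A],[A,A]])$ and argue it is a nonzero ideal of $A$. That $[[A,A],[A,A]]$ spans a Lie ideal — and more relevantly that the subalgebra it generates is a two-sided ideal — follows because in any algebra the associative subalgebra generated by a Lie ideal is an associative ideal; concretely, $[A,A]$ is a Lie ideal, ${\rm Alg}([A,A])$ is an associative ideal, and iterating, $[[A,A],[A,A]]$ generates an associative ideal as well. To see $I \ne \{0\}$ one uses noncommutativity: if $[[A,A],[A,A]] = \{0\}$ then $[A,A]$ would be a commutative subalgebra whose associated structure forces $A$ to be close to commutative; more carefully, in a simple noncommutative ring one shows $[A,A] \ne \{0\}$, then that $[[A,A],A] \ne \{0\}$, and then that $[[A,A],[A,A]] \ne \{0\}$, each time using that the relevant set, being a Lie ideal, generates a nonzero associative ideal which must be all of $A$ by simplicity. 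Since $I$ is a nonzero ideal of the simple algebra $A$, we get $I = A$.

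With $A = {\rm Alg}([[A,A],[A,A]])$ established, Theorem \ref{mt} applies verbatim: for every $u \in {\rm Alg}([[A,A],[A,A]]) = A$ and all $x,y \in A$ we have $\Phi(xy,u) + \Phi(ux,y) + \Phi(yu,x) = 0$, which is exactly the assertion of the corollary after renaming $u$ to $z$.

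The main obstacle is the purely ring-theoretic fact that a noncommutative simple algebra satisfies $[[A,A],[A,A]] \ne \{0\}$; one must be slightly careful since $A$ need not be unital and $F$ may be small (e.g.\ $\mathbb{F}_2$ is excluded by char $\ne 2$, but there is still the classical subtlety about $2\times 2$ matrices over a field of three elements in related Lie-ideal theorems). I expect this step is handled either by citing a standard result on Lie ideals of simple rings (as in Herstein's theory) or by a short self-contained argument: if $[[A,A],[A,A]] = \{0\}$, then since ${\rm Alg}([A,A])$ is an ideal equal to $A$ by simplicity and noncommutativity, $[A,A]$ commutes with itself, so $[A,A] + [A,A]^2$ is a commutative subalgebra that is also an ideal (being generated by the Lie ideal $[A,A]$), hence equals $A$, contradicting noncommutativity. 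This is the only place real work is needed; everything else is bookkeeping and an invocation of Theorem \ref{mt}.
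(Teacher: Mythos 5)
Your overall strategy is the paper's: dispose of the commutative case by linearizing $\Phi(x^2,x)=0$, and in the noncommutative case show $A={\rm Alg}([[A,A],[A,A]])$ and invoke Theorem \ref{mt}. The commutative step and the final invocation are fine. The gap is in the middle: the lemma you lean on, namely that \emph{the associative subalgebra generated by a Lie ideal is an associative ideal}, is false. The center $Z=F\cdot 1$ of $M_2(F)$ is a Lie ideal (it commutes with everything), and the subalgebra it generates is $Z$ itself, which is not an ideal of $M_2(F)$. Because of this, your chain ``$L$ is a nonzero Lie ideal $\Rightarrow$ $L$ generates a nonzero associative ideal $\Rightarrow$ that ideal is $A$ by simplicity $\Rightarrow$ ${\rm Alg}(L)=A$'' breaks at the last arrow: the \emph{ideal} generated by any nonzero subset of a simple algebra is all of $A$, which tells you nothing about the \emph{subalgebra} generated. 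Concretely, your argument cannot rule out the scenario in which $L=[[A,A],[A,A]]$ is nonzero but contained in the center, in which case ${\rm Alg}(L)\subseteq Z\neq A$ and the reduction to Theorem \ref{mt} fails. Your fallback ``self-contained argument'' has the same defect, since it again asserts that ${\rm Alg}([A,A])$ is an ideal ``being generated by the Lie ideal $[A,A]$.''

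The paper closes exactly this hole with Herstein's Lie-ideal theory: $L=[[A,A],[A,A]]$ is a Lie ideal, so by \cite[Theorem 1.3]{Her1} either $L=[A,A]$ — in which case ${\rm Alg}(L)=A$ by the Corollary on p.~6 of \cite{Her1} (itself a nontrivial fact about the subring generated by $[A,A]$ in a simple ring, not a formal consequence of being a Lie ideal) — or $L\subseteq Z$, and then \cite[Lemma 1]{Her2} (for a Lie ideal $T$, $[T,T]\subseteq Z$ forces $T\subseteq Z$) applied twice gives $[A,A]\subseteq Z$ and then $A\subseteq Z$, contradicting noncommutativity. So your instinct that ``the only place real work is needed'' is the ring-theoretic input was right, but the specific substitute you propose for that input is not a valid statement, and the central-Lie-ideal case genuinely requires the extra lemma from \cite{Her2} rather than a simplicity argument.
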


\begin{proof}
If $A$ is commutative, then the desired conclusion follows immediately from the linearized form \eqref{ll} of our condition $\Phi(x^2,x)=0$.
Thus, assume that $A$ is not commutative. We claim that then $A= {\rm Alg}([[A,A],[A,A]])$. Indeed, this follows from the classical results by Herstein. Observe first that the Jacobi identity implies that $L=[[A,A],[A,A]]$, the linear span of all elements of the form $[[x,y],[z,w]]$, $x,y,z,w\in A$, is a Lie ideal of $A$ (i.e., $[L,A]\subseteq L$). Therefore, \cite[Theorem 1.3]{Her1} tells us that either $L = [A,A]$ or $L$ is contained in the center $Z$ of $A$. In the former case, the desired conclusion 
$A= {\rm Alg}(L)$ follows from \cite[Corollary on p.\ 6]{Her1}. Assuming that $L\subseteq Z$, we may use  \cite[Lemma 1]{Her2} which states that if $T$ is any Lie ideal of $A$, then $[T,T]\subseteq Z$ implies $T\subseteq Z$. 
Thus, $L\subseteq Z$ yields $[A,A]\subseteq Z$ which in turn implies $A\subseteq Z$, a contradiction.
\end{proof}

The next example shows that the assumption that char$(F)\ne 2$ is necessary in  Corollary \ref{ca}.

\begin{example}
Let $F$ be a field with char$(F)= 2$ and let $A=F(X)$ be the rational function field over $F$. Note that 
 $x^2\in   F(X^2)$ for every $x\in A$, i.e., the set of all squares in $A$ is contained in the subfield of rational functionals in $X ^2$. If $\Phi:A\times A\to F$ is any skew-symmetric (= symmetric in this context) bilinear functional such that 
$$\Phi\big( F(X^2), A\big)=\{0\},\,\,\, \Phi(X ^3,X^3)=0,\,\,\,\mbox{and}\,\,\,\Phi(X^5,X)=1,$$ then  $\Phi$ satisfies $\Phi(x^2,x)=0$ for every $x\in A$, but $$\Phi(xy,z) + \Phi(zx,y) + \Phi(yz,x)=1$$ for $x=X$, $y=X^2$, and $z=X^3$.
\end{example}

We conclude this section with two results   giving  criteria  for  a simple algebra to be zLpd.   In the statement of the first one we consider the dual space $A^*$ as an $A$-bimodule (as in the proof of Theorem  \ref{mt}). Recall that a derivation $\delta$ from $A$ to a  bimodule $M$ is said to be {\em inner} if there exists a $\tau \in M$ such that $\delta(x)=\tau \cdot x - x\cdot \tau$ for all $x\in A$.

\begin{corollary}\label{cb}
Let $A$ be a simple algebra over a field $F$ with char$(F)\ne 2$. If every derivation from $A$ to $A^*$ is inner, then $A$ is zLpd.
\end{corollary}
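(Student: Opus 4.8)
The plan is to start from a bilinear functional $\Phi:A\times A\to F$ satisfying $\Phi(x,y)=0$ whenever $[x,y]=0$, and to exhibit a linear functional $\tau$ on $A$ with $\Phi(x,y)=\tau([x,y])$ for all $x,y$; by \cite[Proposition 1.3]{zpdbook} it suffices to treat the scalar-valued case, so this is no loss of generality. As noted in the introduction, such a $\Phi$ is automatically skew-symmetric and satisfies $\Phi(x^2,x)=0$, so Corollary \ref{ca} applies and gives the identity
$$\Phi(xy,z)+\Phi(zx,y)+\Phi(yz,x)=0$$
for all $x,y,z\in A$.

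Next I would exploit this identity the same way Theorem \ref{mt} does: define $\delta:A\to A^*$ by $\delta(y)(x)=\Phi(y,x)$. The computation in the proof of Theorem \ref{mt} already shows $\delta$ is a Jordan derivation; but now, using the full strength of Corollary \ref{ca} (valid for all $u\in A$, not just $u\in {\rm Alg}([[A,A],[A,A]])$, since $A$ is simple), one rewrites $\Phi(xy,u)+\Phi(ux,y)+\Phi(yu,x)=0$ as $\delta(xy)(u) = \delta(x)(uy) + \delta(y)(xu) = (\delta(x)\cdot y)(u) + (x\cdot\delta(y))(u)$ for all $u$, i.e. $\delta(xy)=\delta(x)\cdot y + x\cdot\delta(y)$. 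Hence $\delta$ is an honest derivation from $A$ to $A^*$. By hypothesis it is inner, so there is $\tau\in A^*$ with $\delta(x)=\tau\cdot x - x\cdot\tau$, which unwound means $\Phi(x,y)=\delta(x)(y)=\tau(xy)-\tau(yx)=\tau([x,y])$ for all $x,y\in A$. This is exactly \eqref{tre}, so $A$ is zLpd.

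The only point that needs a little care — and the step I would flag as the main (minor) obstacle — is the bookkeeping in passing from the symmetrized identity of Corollary \ref{ca} to the derivation property of $\delta$: one must make sure the skew-symmetry of $\Phi$ is used in exactly the right places so that the three terms $\Phi(xy,u)$, $\Phi(ux,y)$, $\Phi(yu,x)$ match $\delta(xy)(u)$, $-(\delta(x)\cdot y)(u)$, $-(x\cdot\delta(y))(u)$ with the correct signs under the bimodule action $(x\cdot f)(y)=f(yx)$, $(f\cdot x)(y)=f(xy)$. Everything else is immediate from Corollary \ref{ca} and the hypothesis that derivations into $A^*$ are inner; in particular simplicity is used only through Corollary \ref{ca}, which supplies the identity globally on $A\times A\times A$.
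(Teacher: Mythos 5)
Your proof is correct and follows essentially the same route as the paper: reduce to the scalar-valued case, apply Corollary \ref{ca} to get the cyclic identity, read it off as saying that $\delta(y)(x)=\Phi(y,x)$ is an honest derivation into $A^*$, and invoke the innerness hypothesis to produce $\tau$ with $\Phi(x,y)=\tau([x,y])$. The only blemish is a transcription slip in your intermediate display --- under the paper's bimodule conventions it should read $\delta(xy)(u)=\delta(x)(yu)+\delta(y)(ux)$ --- but your final identity $\delta(xy)=\delta(x)\cdot y+x\cdot\delta(y)$ and the conclusion are exactly right.
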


\begin{proof} Let $\Phi:A\times A\to F$ be a bilinear functional such that $\Phi(x,y)=0$ whenever $[x,y]=0$. Then $\Phi$ is skew-symmetric and satisfies  $\Phi(x^2,x)=0$ for every $x\in A$. By Corollary \ref{ca}, $\Phi$ satisfies $\Phi(xy,z) =\Phi(y,zx) + \Phi(x,yz)$ for all $x,y,z\in A$, which means  that  $\delta:A\to A^*$ defined by $\delta(x)(z)=\Phi(x,z)$ is a derivation. By our assumption, there exists a $\tau\in A ^*$ such that $\delta(x) = \tau \cdot x - x\cdot \tau$ for all $x\in A$. Hence,
$$\Phi(x,y)= \delta(x)(y)= (\tau\cdot x - x\cdot\tau)(y)= \tau(xy - yx)$$
for all $x,y\in A$, which proves that $A$ is zLpd.
\end{proof}

\begin{remark}Let $A$ be a finite-dimensional simple algebra over a field $F$ with char$(F)\ne 2$. If $A$ is separable, then every derivation from $A$ to any $A$-bimodule $M$ is inner \cite[Theorem 8.1.19]{Ford}, so $A$ is zLpd by Corollary \ref{cb}. Furthermore, since
the direct product of finitely many zLpd algebras is again a zLpd algebra \cite[Theorem 1.16]{zpdbook}, it follows that every separable algebra (over a field $F$ with char$(F)\ne 2$) is zLpd. In particular, every finite-dimensional semisimple  algebra over a perfect field $F$ with char$(F)\ne 2$ is zLpd. This result is new, but we do not know whether it actually holds for finite-dimensional semisimple  algebras over arbitrary fields.
\end{remark}

Recall that a skew-symmetric bilinear functional  $\Phi:A\times A\to F$ is called a {\em 2-cocycle} if $$\Phi([x,y],z) +\Phi([z,x],y) + \Phi([y,z],x)=0$$ for all $x,y,z\in A$, and is called a {\em coboundary} if there exists a $\tau$ in $A^*$ such that
$\Phi(x,y)=\tau([x,y])$ for all $x,y\in A$. It is clear that every  coboundary is a 2-cocycle. The condition that the converse is also true (i.e., every 2-cocycle is a coboundary)
 can be stated as that the second cohomology group $H ^2(A,F)$ is trivial.

\begin{corollary}\label{cb2}
Let $A$ be a simple algebra over a field $F$ with char$(F)\ne 2$. If   the second cohomology group $H ^2(A,F)$ is trivial, then $A$ is zLpd.
\end{corollary}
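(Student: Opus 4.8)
The plan is to reduce the statement to Corollary \ref{cb} by showing that the triviality of $H^2(A,F)$ forces every derivation from $A$ to $A^*$ to be inner, at least in the presence of the conclusion of Corollary \ref{ca}. Concretely, I would start exactly as in the proof of Corollary \ref{cb}: take a bilinear functional $\Phi:A\times A\to F$ with $\Phi(x,y)=0$ whenever $[x,y]=0$. As observed there, $\Phi$ is then skew-symmetric and satisfies $\Phi(x^2,x)=0$ for all $x$, so Corollary \ref{ca} applies and gives
$$\Phi(xy,z)+\Phi(zx,y)+\Phi(yz,x)=0$$
for all $x,y,z\in A$.

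The key step is to extract from this identity the fact that $\Phi$ is a $2$-cocycle in the sense defined just above the statement. This is a routine manipulation: applying the identity above to the triple $(x,y,z)$ and expanding $\Phi([x,y],z)+\Phi([z,x],y)+\Phi([y,z],x)$ using bilinearity and skew-symmetry of $\Phi$, each term $\Phi(xy,z)$, $\Phi(yx,z)$, etc.\ can be rearranged, and one checks that the six resulting terms cancel in pairs precisely because of the identity $\Phi(xy,z)+\Phi(zx,y)+\Phi(yz,x)=0$ (and its analogue obtained by swapping the roles of the two arguments in each $\Phi$). Hence $\Phi$ is a $2$-cocycle.

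Now I invoke the hypothesis that $H^2(A,F)$ is trivial: every $2$-cocycle is a coboundary, so there exists $\tau\in A^*$ with $\Phi(x,y)=\tau([x,y])=\tau(xy-yx)$ for all $x,y\in A$. Setting $\tau':A\to X$ to be the appropriate linear functional (here $X=F$, and by the reduction in \cite[Proposition 1.3]{zpdbook} recalled in the introduction it suffices to treat $X=F$) gives $\Phi(x,y)=\tau([x,y])$, which is exactly the defining condition \eqref{tre} for $A$ to be zLpd. I expect the only genuine content, beyond bookkeeping, to be the verification that the "$\Phi(xy,z)$'' cyclic identity is equivalent to the "$\Phi([x,y],z)$'' cocycle identity; everything else is assembling pieces already available. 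An alternative, essentially equivalent route is to observe that under the conclusion of Corollary \ref{ca} the map $\delta:A\to A^*$, $\delta(x)(z)=\Phi(x,z)$, is a derivation (as in the proof of Corollary \ref{cb}), that derivations into $A^*$ correspond to $2$-cocycles on $A$ with values in $F$, and that inner derivations correspond to coboundaries, so triviality of $H^2(A,F)$ is precisely the hypothesis of Corollary \ref{cb}; then Corollary \ref{cb} finishes the proof directly.
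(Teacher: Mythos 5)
Your main argument is correct and is essentially the paper's own proof: Corollary \ref{ca} gives the cyclic identity, two applications of it (to the triples $(x,y,z)$ and $(y,x,z)$) show that $\Phi$ is a $2$-cocycle, and triviality of $H^2(A,F)$ then yields $\Phi(x,y)=\tau([x,y])$, which is the zLpd condition. One caution about your closing aside: triviality of $H^2(A,F)$ is \emph{not} literally the hypothesis of Corollary \ref{cb}, since $2$-cocycles are by definition skew-symmetric and therefore correspond only to the skew-symmetric (Lie) derivations into $A^*$ rather than to all derivations --- a distinction the paper itself draws in the remark following the corollary --- though this does not affect your proof, because the particular $\delta$ arising from a $\Phi$ vanishing on commuting pairs is skew-symmetric.
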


\begin{proof}
As in the preceding proof we see that  a bilinear functional $\Phi$ vanishing on pairs of commuting elements satisfies
 $\Phi(xy,z) =\Phi(y,zx) + \Phi(x,yz)$, $x,y,z\in A$. This readily implies that $\Phi$ is a $2$-cocycle and hence  a coboundary by our assumption. This means that $A$ is zLpd. 
\end{proof}

Examples of algebras satisfying the conditions of Corollary \ref{cb2} can be found in \cite{Su2,Su,  Zhao}.  The author is thankful to Kaiming Zhao for pointing out these references.

Let us mention that Corollaries \ref{cb} and \ref{cb2} are quite similar, but written in a different mathematical language. Namely, the condition that every 2-cocycle of $A$ is a coboundary is equivalent to the condition that every Lie derivation  $\delta:A\to A ^*$ such that $\delta(x)(y)=-\delta(y)(x)$ for all $x,y\in A$ is inner.

\section{An application to Jordan homomorphisms}\label{s3}

Let $A$ and $B$ be $F$-algebras. A linear map $J:A\to B$ is called a {\em Jordan homomorphism} if $$J(yz + zy)=J(y)J(z)+ J(z)J(y)$$
for all $y,z\in A$. Obvious examples are homomorphisms and antihomomorphisms. A more general example is {\em 
the sum of a
homomorphism and an antihomomorphism}. By this we mean a linear map
$J:A\to B$  for which there exists ideals
$U$ and $V$ of Alg$(J(A))$ (the subalgebra of $B$ generated by the image of $J$), 
a homomorphism $H : A\to U$, and  an antihomomorphism $K : A \to V$ such that $J=H+K$ and $UV = VU =\{ 0\}$. 

The problem of expressing  Jordan homomorphisms $J$ through homomorphisms, antihomomorphisms, and their sums has a long history; see \cite{Bj2} for a brief survey.  One often assumes  that $J$ is surjective and the algebra $B$ is semiprime, i.e.,  it has no nonzero nilpotent ideals. If, under these assumptions, $J$ is the sum of a homomorphism and an antihomomorphism, then \cite[Lemma 2.1]{Bj2} states that there exist ideals $U_0$ and $V_0$ of $A$ and ideals $U$ and $V$ of $B$ such that
\begin{itemize}\item $U_0 + V_0 = A$ and $U_0\cap V_0 = {\rm ker} J$,
\item $U \oplus V = B$,  \item $\left.J\right|_{U_0}$ is a 
 homomorphism from $U_0$ onto $U$, \item $\left.J\right|_{V_0}$ is a 
 homomorphism from $V_0$ onto $V$.\end{itemize}
The description of $J$ is thus very clear in this case. However, not every Jordan homomorphism from an algebra $A$   onto a semiprime algebra $B$ is the sum of a homomorphism and an antihomomorphism, see \cite[p.\ 458]{BM} and \cite[Example 7.19]{zpdbook}. Our goal is to use Theorem \ref{mt} to show that this does hold if $A$ is equal to ${\rm Alg}([[A,A],[A,A]])$ (and of course char$(F)\ne 2$).

We now list other results that will be needed in the proof.
Assume that  $J:A\to B$ is a Jordan homomorphism, where $A$ and $B$ are   algebras over a field $F$ with 
{\rm char}$(F)\ne 2$. The following statements are known.
\begin{enumerate}
\item[(a)] $J$ satisfies
\begin{equation}\label{d}J(zyz) = J(z)J(y)J(z)\end{equation}
for all $y,z\in A$, and 
\begin{equation}\label{s}J(xyz + zyx) = J(x)J(y)J(z) + J(z)J(y)J(x)\end{equation}
for all $x,y,z\in A$. This is standard and easy to prove: \eqref{d} follows from 
$2zyz = \big(z (z y + yz )  + (zy+yz)z\big)- (z^2 y+yz^2)$, and \eqref{s} follows by linearizing \eqref{d}.
\item[(b)] If $J$ is surjective and $B$ is semiprime, then
\begin{equation}\label{p}\big(J(zw)- J(z)J(w)\big)\big( J(xy)-J(y)J(x)\big)=0\end{equation}
and
\begin{equation}\label{q}\big(J(wz) -J(z)J(w)\big)\big( J(yx)-J(y)J(x)\big)=0\end{equation}
for all $x,y,z,w\in A$. See \cite[Corollary 2.2]{Bjh}.
\item[(c)] 
If $J$ is a reversal  homomorphism, which means that
$$J(xyzw + wzyx) = J(x)J(y)J(z)J(w) + J(w)J(z)J(y)J(x)$$
for all $x,y,z,w\in A$,
 and Alg$(J(A))$
does not contain 
nonzero nilpotent ideals contained in the center of $B$, then  the restriction of $J$ to the commutator ideal
 of $A$ (i.e., the ideal generated by all commutators $[x,y]$ in $A$)  is the sum of a homomorphism and an antihomomorphism \cite[Theorem 4.2]{Bj2}.
\end{enumerate}

We now have enough information to prove the following theorem.

\begin{theorem} \label{Jjorhom2}
Let $A$ and $B$ be   algebras over a field $F$ with 
{\rm char}$(F)\ne 2$. Suppose that ${\rm Alg}([[A,A],[A,A]]) =A$ and that $B$ is semiprime. Then every surjective Jordan homomorphism  $J:A\to B$ is the sum of a homomorphism and an antihomomorphism. 
\end{theorem}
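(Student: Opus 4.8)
The plan is to use Theorem~\ref{mt} to upgrade the Jordan homomorphism $J$ to a \emph{reversal homomorphism}, and then to quote statement~(c).

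First I would introduce the bilinear map $\Phi\colon A\times A\to B$ defined by $\Phi(x,y)=J(x)J(y)-J(xy)$, which measures the failure of $J$ to be multiplicative. The Jordan condition gives $J(x^2)=J(x)^2$, so $\Phi(x,x)=0$ and $\Phi$ is skew-symmetric; taking $y=z=x$ in \eqref{d} gives $J(x^3)=J(x)^3$, whence
\[
\Phi(x^2,x)=J(x^2)J(x)-J(x^3)=0 .
\]
Since $A={\rm Alg}([[A,A],[A,A]])$, Theorem~\ref{mt} applies and yields
\[
J(xy)J(z)+J(zx)J(y)+J(yz)J(x)=J(xyz+zxy+yzx)
\]
for all $x,y,z\in A$. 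Running the same argument for $\Psi(x,y)=J(y)J(x)-J(xy)$ (which is likewise skew-symmetric with $\Psi(x^2,x)=0$) gives the companion identity
\[
J(z)J(xy)+J(y)J(zx)+J(x)J(yz)=J(xyz+zxy+yzx);
\]
subtracting the two also yields $[J(xy),J(z)]+[J(zx),J(y)]+[J(yz),J(x)]=0$.

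Next I would combine these cyclic identities with \eqref{d}, \eqref{s} and the orthogonality relations \eqref{p}, \eqref{q} to prove that
\[
J(xyzw+wzyx)=J(x)J(y)J(z)J(w)+J(w)J(z)J(y)J(x)
\]
for all $x,y,z,w\in A$, i.e.\ that $J$ is a reversal homomorphism. The idea is to evaluate $J$ of a four-letter word by substituting products such as $xy$, $yz$ for single variables in \eqref{s}, and then to use the cyclic identities to replace each factor $J(uv)$ by $J(u)J(v)$ or by $J(v)J(u)$, as appropriate; every such replacement produces an error term of the form $\bigl(J(uv)-J(u)J(v)\bigr)c$ or $c\bigl(J(uv)-J(v)J(u)\bigr)$, and \eqref{p}, \eqref{q}---which is precisely where the semiprimeness of $B$ enters---force these errors to annihilate one another. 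Carrying out this multilinear bookkeeping carefully is the step I expect to be the main obstacle; without the cyclic identities one cannot even pin down $J$ of a product of more than three elements, so Theorem~\ref{mt} is doing essential work here.

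Granting that $J$ is a reversal homomorphism, the proof finishes quickly. Every element $[[x,y],[z,w]]$ lies in the commutator ideal $C$ of $A$, and $C$, being an ideal, is a subalgebra; hence ${\rm Alg}([[A,A],[A,A]])\subseteq C$, so the hypothesis forces $C=A$. Moreover $J$ is surjective, so ${\rm Alg}(J(A))=B$, which is semiprime and therefore has no nonzero nilpotent ideals, in particular none contained in its center. Thus statement~(c) applies and shows that $J$, being the restriction of $J$ to $C=A$, is the sum of a homomorphism and an antihomomorphism, as desired. (Under the convention in which ${\rm Alg}$ of a subset of a unital algebra automatically contains the unit, one obtains only $A=C$ or $A=C\oplus F\cdot 1$; in the latter case $J(1)$ is a central idempotent acting as the identity of $B$, and the decomposition obtained on $C$ extends to $A$ by a short additional argument.)
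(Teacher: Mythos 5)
Your setup is exactly the paper's: the map $\Phi(x,y)=J(xy)-J(x)J(y)$ (yours differs only by a sign) is skew-symmetric with $\Phi(x^2,x)=0$, Theorem~\ref{mt} yields the cyclic identity, and the endgame via statement~(c) is handled correctly, including the observation that the hypothesis forces $A$ to equal its commutator ideal. The problem is that the heart of the proof --- the verification that $J$ is a reversal homomorphism --- is precisely the step you leave as ``multilinear bookkeeping'' and yourself flag as the main obstacle. This is a genuine gap, not a routine omission: your scheme of ``replacing each factor $J(uv)$ by $J(u)J(v)$ or $J(v)J(u)$'' and hoping the error terms cancel via \eqref{p}, \eqref{q} is not obviously executable, because \eqref{p} and \eqref{q} only annihilate a homomorphism-type defect multiplied on a specific side by an antihomomorphism-type defect, so whether the cancellations occur depends delicately on which side each error term lands on.

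What actually makes the computation go through in the paper is an intermediate identity you do not extract: combining the cyclic identity with the Jordan property applied to $zxy+yzx=(zx)y+y(zx)$ isolates
\[
J(xyz)=J(xy)J(z)+J(yz)J(x)-J(y)J(zx),
\]
and this three-variable formula, applied to $J((xy)zw)$ and $J(wz(yx))$, summed, and then combined with \eqref{s}, \eqref{p} and \eqref{q}, produces the reversal identity after a half page of cancellations. Until you carry out (or at least correctly set up) this computation, the proof is incomplete; everything else in your proposal is sound and follows the paper's route. Incidentally, your companion identity obtained from $\Psi(x,y)=J(y)J(x)-J(xy)$ is harmless but adds nothing: it is a formal consequence of the first cyclic identity together with the Jordan condition.
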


\begin{proof}
Define $\Phi:A\times A\to B$ by
$$\Phi(x,y)=J(xy)-J(x)J(y).$$
It is clear that $\Phi$ is skew-symmetric, and,
by \eqref{d}, $\Phi(x^2,x)=0$ for every $x\in A$.  Theorem \ref{mt} thus tells us that
$$\Phi(xy,z) +\Phi(zx,y) + \Phi(yz,x)=0$$ for all $x,y,z\in A$. That is,
$$J(xyz) + J(zxy + yzx)= J(xy)J(z) + J(zx)J(y) + J(yz)J(x).$$
Since 
$$J(zxy + yzx)= J((zx)y + y(zx)) = J(zx)J(y) +J(y)J(zx),$$
we can rewrite this  as
$$J(xyz) = J(xy)J(z) +  J(yz)J(x)-J(y)J(zx).$$
This gives
\begin{align*}
J(xyzw) = J((xy)zw)=
J(xyz)J(w) +J(zw)J(xy)-J(z)J(wxy)
\end{align*}
and
\begin{align*}
J(wzyx) = J(wz(yx))=
J(wz)J(yx) +J(zyx)J(w)-J(z)J(yxw).
\end{align*}
Hence,
\begin{align*}
&J(xyzw + wzyx)\\
 =& J(xyz+zyx)J(w) + J(zw)J(xy) + J(wz)J(yx)    - J(z)J(wxy + yxw),
\end{align*}
which together with \eqref{s} yields
\begin{align}\label{eskor}
&J(xyzw + wzyx) \nonumber\\
=&
J(zw)J(xy) + J(wz)J(yx) \\
&+ J(x) J(y) J(z) J(w) -
J(z) J(w) J(x) J(y). \nonumber
\end{align}
Next, by \eqref{p}, 
$$J(zw)J(xy)=J(z)J(w)J(xy) + J(zw)J(y)J(x) - J(z)J(w)J(y)J(x)$$
and  by \eqref{q},
$$J(wz)J(yx)= J(z)J(w)J(yx) + J(wz)J(y)J(x) - J(z)J(w)J(y)J(x).$$
Accordingly,
\begin{align*}
&J(zw)J(xy) + J(wz)J(xy)\\
=& J(z)J(w)J(xy+yx) + J(zw+wz)J(y)J(x)\\& - 2J(z)J(w)J(y)J(x)
\\
=& J(z)J(w)J(x)J(y) + J(z)J(w)J(y)J(x)  \\
&+J(z)J(w)J(y)J(x)
+ J(w)J(z)J(y)J(x)\\
 &- 2J(z)J(w)J(y)J(x)\\
 =& J(z)J(w)J(x)J(y) + J(w)J(z)J(y)J(x).
\end{align*}
From \eqref{eskor} it now follows that
\begin{align*}
&J(xyzw + wzyx) \\
= &J(z)J(w)J(x)J(y) + J(w)J(z)J(y)J(x) \\
 & + J(x) J(y) J(z) J(w) -
J(z) J(w) J(x) J(y)\\
=&J(x)J(y)J(z)J(w) + J(w) J(z) J(y) J(x).
\end{align*}
We have thus proved that $J$ is a reversal homomorphism.  Since $J$ is surjective and $B$ semiprime,  Alg$(J(A))$
does not contain 
nonzero nilpotent ideals, and since  $A={\rm Alg}([[A,A],[A,A]])$, $A$ is also equal to its commutator ideal. The desired conclusion therefore follows from 
(c).
\end{proof}

It 
is quite common that results on homomorphisms are applicable to the study of derivations. In the proof of Theorem \ref{Jjorhom2}, a result on (Jordan) derivations was (indirectly, via the proof of Theorem \ref{mt}) used to obtain a result on (Jordan) homomorphisms. This, however, seems rather unusual and surprising.

Finally, we remark that the computations in the proof of Theorem \ref{Jjorhom2} are similar to those in the proof of \cite[Theorem 7.22]{zpdbook}. However, while this latter result also considers Jordan homomorphisms onto semiprime algebras, the main idea of its proof is different (its main assumption is that the algebra $A$ is zero product determined).

\end{document}